\title{Quantum Functions}
\author{Andre Kornell}
\address{Department of Mathematics\\
University of California \\ Berkeley, CA 94720-3840}
\email{kornell@math.berkeley.edu}
\thanks{The research reported here was supported by National Science Foundation grant DMS-0753228.}
\newtheorem{theorem}{Theorem}[section]
\newtheorem{lemma}[theorem]{Lemma}
\newtheorem{proposition}[theorem]{Proposition}
\theoremstyle{definition}
\newtheorem{definition}[theorem]{Definition}
\theoremstyle{remark}
\theoremstyle{plain}
\newtheorem*{theorem*}{Theorem}
\newtheorem*{lemma*}{Lemma}
\newtheorem*{proposition*}{Proposition}
\theoremstyle{definition}
\newtheorem*{definition*}{Definition}
\theoremstyle{remark}
\newtheorem*{conventions}{Conventions used in this article}
\newcommand{\subsetof}{\subseteq}
\newcommand{\To}{\longrightarrow}
\newcommand{\Tensor}{\overline{\otimes}}
\newcommand{\tensor}{\otimes}
\newcommand{\union}{\cup}
\newcommand{\suchthat}{\,|\,}
\newcommand{\inv}{^{-1}}
\newcommand{\vN}{\mathrm{vN}}
\newcommand{\qF}{\mathbf{qF}}
\renewcommand{\H}{\mathcal H}
\newcommand{\B}{\mathcal B}
\newcommand{\M}{\mathcal M}
\newcommand{\N}{\mathcal N}
\newcommand{\V}{\mathcal V}
\newcommand{\W}{\mathcal W}
\newcommand{\K}{\mathcal K}
\newcommand{\F}{\mathcal F}
\renewcommand{\L}{\mathcal L}
\newcommand{\CC}{\mathbb C}
\newcommand{\GGG}{\mathfrak G}
\renewcommand{\vN}{\mathbf{vN}}
\begin{document}

\begin{abstract}
Weaver has recently defined the notion of a quantum relation on a von Neumann algebra. We demonstrate that the corresponding notion of a quantum function between two von Neumann algebras coincides with that of a normal unital $*$-homomorphism in the opposite direction. This is essentially a reformulation of a previously known result from the theory of Hilbert von Neumann modules.
\end{abstract}

\maketitle

A relation between sets $X$ and $Y$ is simply a subset of $Y \times X$. Motivated 
by Kuperberg and Weaver's work on quantum metrics \cite{KuperbergWeaver}, Weaver has recently proposed the following generalization of relations to the noncommutative setting:
\begin{definition*}[Weaver, \cite{Weaver}*{Definition 2.1}]
 Let $\M \subsetof \B(\H)$ and $\N \subsetof\B(\K)$ be von Neumann algebras. A \emph{quantum relation} between $\M$ and $\N$ is an ultraweakly closed subspace $\V \subsetof \B(\H, \K)$ such that $\N'\V\M' \subsetof \V$.
\end{definition*}

This definition reduces to the usual one when $\M = \ell^\infty(X)$ and $\N= \ell^\infty(Y)$. It has many other virtues. It is simple to state and simple to handle, and many familiar properties of relations have natural analogs. 

\begin{definition*}[Weaver, \cite{Weaver}*{Definition 2.4}]
Let $\M_0$, $\M_1$ and $\M_2$ be von Neumann algebras.
\begin{itemize}
 \item The \emph{diagonal quantum relation} on $\M_0$ is the quantum relation $\M_0'$ between $\M_0$ and $\M_0$.
\item If $\V$ is a quantum relation between $\M_0$ and $\M_1$, then the \emph{inverse} of $\V$ is the quantum relation $\V^*$ between $\M_1$ and $\M_0$.
\item If $\V_0$ is a quantum relation between $\M_0$ and $\M_1$, and $\V_1$ is a quantum relation between $\M_1$ and $\M_2$, then their \emph{composition} $\V_1\circ\V_0$  is the quantum relation between $\M_0$ and $\M_2$ defined by
$$\V_1\circ\V_0 = \V_1 \V_0 = \overline{\mathrm{span} \{v_1v_0 \suchthat v_1 \in \V_1, \, v_0 \in \V_0\}}^{ultraweak}.$$
\end{itemize}
\end{definition*}

If we interpret inclusion between quantum relations as the proper generalization of inclusion between classical relations, we arrive immediately at the following definitions.

\begin{definition*}[Weaver, \cite{Weaver}*{Definition 2.4}]
 Let $\V$ be a quantum relation on a von Neumann algebra $\M$. Then $\V$ is said to be
\begin{itemize}
\item \emph{reflexive} in case $\M' \subsetof \V$,
\item \emph{symmetric} in case $\V^*= \V$,
\item \emph{antisymmetric} in case $\V \cap \V^* \subsetof \M'$, and
\item \emph{transitive} in case $\V\V\subsetof \V$.
\end{itemize}
\end{definition*}

Thus, Weaver has generalized a large class of mathematical objects including orderings, graphs, equivalence relations, etc.

In foundations, a function from a set $X$ to a set $Y$ is typically defined as a relation $F \subsetof Y \times X$  such that for each element $x \in X$, there is exactly one element $y \in Y$ such that $(y,x) \in F$. Denoting the diagonal relations on $X$ and $Y$ by $\Delta_X$ and $\Delta_Y$ respectively, we may restate this condition as a pair of inequalities:
\begin{enumerate}
 \item $\Delta_X\subsetof F^* \circ F$
 \item $F \circ F^* \subsetof \Delta_Y$
\end{enumerate}
It is therefore natural to investigate the quantum relations that satisfy the analogs of these inequalities, i.e., quantum functions. In fact, we show that they correspond exactly to the normal unital $*$-homomorphisms:

\begin{definition*}
 Let $\M \subsetof \B(\H)$ and $\N \subsetof \B(\K)$ be von Neumann algebras. A \emph{quantum function} from $\M$ to $\N$ is a quantum relation $\V$ between $\M$ and $\N$ such that $\M' \subsetof \V^* \V$ and $\V\V^* \subsetof \N'$.
\end{definition*}

\begin{theorem*}
 Let $\M\subsetof \B(\H)$ and $\N \subsetof \B(\K)$ be von Neumann algebras. There is a canonical bijective correspondence between normal unital $*$-homomorphisms $\N \To \M$, and quantum functions from $\M$ to $\N$.  This correspondence is functorial.
\end{theorem*}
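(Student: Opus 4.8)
The plan is to realize both objects inside a single $2\times 2$ operator structure on $\H\oplus\K$ and to let von Neumann commutation theory do the work. Given a normal unital $*$-homomorphism $\pi\colon\N\To\M$, I would set
$$\V_\pi=\{\,v\in\B(\H,\K)\suchthat av=v\pi(a)\ \text{for all }a\in\N\,\},$$
the space of intertwiners from the representation $a\mapsto\pi(a)$ of $\N$ on $\H$ to the identity representation of $\N$ on $\K$. That $\V_\pi$ is an ultraweakly closed subspace with $\N'\V_\pi\M'\subsetof\V_\pi$ is immediate: $n'\in\N'$ commutes with $a\in\N$ and $m'\in\M'$ commutes with $\pi(a)\in\M$, so $a(n'vm')=n'av m'=n'v\pi(a)m'=(n'vm')\pi(a)$. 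Likewise, for $v,w\in\V_\pi$ one computes $a(vw^*)=v\pi(a)w^*=vw^*a$ directly, taking adjoints in the defining relation to get $\pi(a)w^*=w^*a$; hence $\V_\pi\V_\pi^*\subsetof\N'$. Thus the only substantive point in the forward direction is the inequality $\M'\subsetof\V_\pi^*\V_\pi$.

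For that, form $\sigma=\pi\oplus\mathrm{id}\colon\N\To\B(\H\oplus\K)$ and its commutant $\mathcal Q=\sigma(\N)'$. Writing $p,q$ for the projections onto $\H$ and $\K$ (both lie in $\mathcal Q$, since $\sigma(\N)$ is block diagonal), one reads off $p\mathcal Q p=\pi(\N)'$, $q\mathcal Q q=\N'$, $q\mathcal Q p=\V_\pi$ and $p\mathcal Q q=\V_\pi^*$. I would then show that the central cover of $q$ in $\mathcal Q$ is $1$: every central projection of $\mathcal Q$ lies in $Z(\mathcal Q)=Z(\sigma(\N))=\sigma(Z(\N))$, so has the form $\pi(c)\oplus c$ with $c\in Z(\N)$, and $(\pi(c)\oplus c)q=0$ forces $c=0$ because the identity representation of $\N$ on $\K$ is faithful. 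Hence the ultraweakly closed ideal generated by $q$ is all of $\mathcal Q$, and compressing by $p$ gives $\overline{\V_\pi^*\V_\pi}=p\mathcal Q p=\pi(\N)'\supseteq\M'$, the last inclusion because $\pi(\N)\subsetof\M$. So $\V_\pi$ is a quantum function.

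Conversely, given a quantum function $\V$, I would build the companion von Neumann algebra by hand. The quantum relation axiom gives $\N'\V\subsetof\V$ (take $m'=1$), and together with $\V\V^*\subsetof\N'$ and $\M'\subsetof\overline{\V^*\V}$ one checks that
$$\mathcal Q(\V)=\begin{pmatrix}\P&\V^*\\\V&\N'\end{pmatrix},\qquad\P:=\overline{\V^*\V}^{\,ultraweak},$$
is a unital, self-adjoint, ultraweakly and multiplicatively closed subspace of $\B(\H\oplus\K)$, i.e.\ a von Neumann algebra; here $\P$ is itself a von Neumann algebra because $\V^*\V\cdot\V^*\V\subsetof\V^*\N'\V\subsetof\V^*\V$. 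Since $p,q\in\mathcal Q(\V)$, every element of $\mathcal Q(\V)'$ is block diagonal, say $x\oplus t$, and commuting with the off-diagonal corners forces $t\in\N$, $x\in\P'$ and $vx=tv$ for all $v\in\V$. The assignment $x\oplus t\mapsto t$ is an injective normal unital $*$-homomorphism $\Phi\colon\mathcal Q(\V)'\To\N$: injectivity is the key uniqueness statement, that $vx=0$ for all $v\in\V$ implies $\overline{\V^*\V}\,x=0$, whence $x=0$ because $1\in\M'\subsetof\overline{\V^*\V}$. Surjectivity is exactly the reduction theorem: $\Phi(\mathcal Q(\V)')$ acting on $\K$ is the reduction of $\mathcal Q(\V)'$ by $q\in(\mathcal Q(\V)')'$, whose commutant is $q\mathcal Q(\V)q=\N'$, so $\Phi(\mathcal Q(\V)')'=\N'$ and therefore $\Phi(\mathcal Q(\V)')=\N''=\N$. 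I would then define $\pi(a)=x$ where $x\oplus a=\Phi\inv(a)$; this is a normal unital $*$-homomorphism, and $\pi(a)\in\M$ because $x\in\P'$ commutes with $\M'\subsetof\P$, so $x\in\M''=\M$.

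Finally, the two constructions are mutually inverse and functorial. The round trip $\pi\mapsto\V_\pi\mapsto\pi$ is immediate from the uniqueness statement, and the round trip $\V\mapsto\pi\mapsto\V_\pi$ follows from the identity $\mathcal Q(\V)'=\{\pi(a)\oplus a\suchthat a\in\N\}=\sigma(\N)$: taking commutants yields $\mathcal Q(\V)=\sigma(\N)'=\mathcal Q(\V_\pi)$, and comparing lower-left corners gives $\V=\V_\pi$. For functoriality, the intertwiner description makes the inclusion $\V_{\pi_2}\V_{\pi_1}\subsetof\V_{\pi_1\circ\pi_2}$ a one-line calculation, identities correspond to diagonal quantum relations ($\V_{\mathrm{id}}=\N'$), and the reverse inclusion is free once one observes that quantum functions form an antichain under inclusion: if $\V\subsetof\W$ are both quantum functions then the same uniqueness argument forces the associated homomorphisms to agree, hence $\V=\W$. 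The main obstacle throughout is the commutation-theoretic core---that $\mathcal Q(\V)'$ is precisely the graph of $\pi$---which is where the central cover computation in the forward direction and the reduction theorem in the backward direction are essential; everything else is bookkeeping with the defining relations.
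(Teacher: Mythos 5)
Your argument is correct, but it takes a genuinely different route from the paper's. The paper proves $\M' \subsetof \GGG(\pi)^*\GGG(\pi)$ by invoking Dixmier's representation theorem to write $\pi(b) = w^*(b \tensor 1)w$ for an isometry $w \in \B(\H, \K \tensor \L)$ and then slicing $w$ along a basis of $\L$, so that the operators $(1 \tensor \hat e_\alpha^*)w$ form an explicit family in $\GGG(\pi)$ with $\sum \left((1\tensor\hat e_\alpha^*)w\right)^*\left((1\tensor\hat e_\alpha^*)w\right) = 1$; you get the same inequality softly from the corner description of $\sigma(\N)'$ for $\sigma = \pi \oplus \mathrm{id}$ together with the central cover computation $c(q)=1$, and in fact you obtain the sharper identity $\V_\pi^*\V_\pi = \pi(\N)'$, which the paper never states. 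In the reverse direction the paper is constructive: a Zorn's-lemma/polar-decomposition argument (a special case of Paschke's structure theorem for self-dual $W^*$-modules) produces partial isometries $u_\alpha \in \V$ with $u_\alpha u_\beta^* = 0$ and $\sum u_\alpha^* u_\alpha = 1$, yielding the explicit formula $\GGG\inv(\V)(b) = \sum u_\alpha^* b u_\alpha$, and injectivity of $\GGG\inv$ is then proved via its ``homotopy'' proposition and the lemma that $(\N \Tensor \CC_{\ell^2(I)})'w_I\M' = \V \Tensor \B(\CC, \ell^2(I))$. You replace all of this machinery with the linking algebra $\mathcal{Q}(\V)$ and the single commutation-theoretic identity $\mathcal{Q}(\V)' = \{\pi(a) \oplus a \suchthat a \in \N\}$, with surjectivity of your $\Phi$ supplied by the standard theorem on induced algebras (the commutant of the reduction of $\mathcal{Q}(\V)'$ by $q$ is $q\mathcal{Q}(\V)q = \N'$); this one identity delivers both round trips at once and the antichain property of quantum functions as a bonus, at the cost of being nonconstructive (no formula for $\pi$, no concrete generating family for $\V$), whereas the paper's approach exhibits the Stinespring-type picture explicitly. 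One small repair: in the functoriality step, before the antichain property can upgrade the inclusion of $\GGG(\pi_1)\GGG(\pi_0)$ in $\GGG(\pi_0 \circ \pi_1)$ to an equality, you must check that the composition of two quantum functions is itself a quantum function; this is the paper's one-line computation $\M_0' \subsetof \V_0^*\M_1'\V_0 \subsetof (\V_1\V_0)^*(\V_1\V_0)$ and $(\V_1\V_0)(\V_1\V_0)^* \subsetof \V_1\M_1'\V_1^* \subsetof \M_2'$, so it is an omitted line rather than a gap. (Also, trivially: in your description of $\mathcal{Q}(\V)'$, the conditions $t \in \N$ and $x \in \mathcal{P}'$ come from commuting with the diagonal corners, not the off-diagonal ones.)
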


By chance, Debashish Goswami visited Berkeley soon after I posted this paper to arxiv.org. He explained that, although the motivation and the statement of theorem are new, the proof is essentially already known. He recommended his book \emph{Quantum Stochastic Processes and Noncommutative Geometry} \cite{SinhaGoswami} as a reference. Specifically, the core argument is essentially the proof of \cite{SinhaGoswami}*{Theorem 4.2.7}; the condition $\V \V^* \subsetof \N'$ implies that $\V^*$ is a right Hilbert $\N'$-module. However, this theorem cannot be applied directly because Hilbert von Neumann modules are defined to be closed in the strong operator topology, whereas a quantum relation is defined to be closed in the ultraweak topology. Later, Alexandru Chirvasitu pointed out that both theorems can be obtained from \cite{Rieffel}*{Proposition 6.12}. I thank Alexandru Chirvasitu and Debashish Goswami for their explanations. I also thank my advisor, Marc Rieffel, for suggesting Nik Weaver's papers to me.

\centerline{\rule[3 pt]{6 cm}{.1 pt}}

\begin{conventions}
 Let $\H$ be a Hilbert space. If $\xi \in \H$, then $\hat \xi \in \B(\CC, \H)$ is defined by $\hat \xi(c) = c\xi$. If $\V$ and $\W$ are ultraweakly closed subspaces of $\B(\H)$, then $\V\W = \overline{\mathrm{span}\{vw \suchthat v\in \V, \, w \in \W\}}^{uw}$, and $\V \Tensor \W = \overline{\mathrm{span}\{v\tensor w \suchthat v\in \V, \, w \in \W\}}^{uw} $. The tensor product of two Hilbert spaces is defined in such a way that $\H \tensor \CC = \H = \CC \tensor \H$. The von Neumann algebra of scalar operators on $\H$ is denoted by $\CC_\H$.
\end{conventions}

\begin{definition*}
 Let $\M$ and $\N$ be von Neumann algebras. Then $\mathbf{vN}(\N, \M)$ denotes the set of normal unital $*$-homomorphisms $\N \To \M$, and  $\mathbf{qF}(\M, \N)$ denotes the set of quantum functions from $\M$ to $\N$.
\end{definition*}

\section{Functions from Homomorphisms}

Let $\M \subsetof \B(\H)$ and $\N \subsetof \B(\K)$ be von Neumann algebras, and let $\pi: \N \To \M$ be a normal unital $*$-homomorphism. 

\begin{theorem}[Dixmier, \cite{TakesakiI}*{Theorem IV.5.5}]\label{representation by isometry}
 There is a Hilbert space $\L$ and an isometry $w \in  \B(\H, \K \tensor \L)$ such that $\pi(b) = w^*(b \tensor 1)w $.
\end{theorem}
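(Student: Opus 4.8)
Since the asserted identity $\pi(b) = w^*(b\tensor 1)w$ does not mention $\M$, I would treat $\pi$ as a normal unital $*$-representation of $\N$ on $\H$ and prove the equivalent statement that $\pi$ is unitarily equivalent to a subrepresentation of an amplification of the identity representation $\iota\colon\N\subsetof\B(\K)$. That is, I would seek a Hilbert space $\L$ and an isometry $w\colon\H\To\K\tensor\L$ with $(b\tensor 1)w = w\pi(b)$ for all $b\in\N$; since then $w^*w = 1$, such a $w$ automatically satisfies $w^*(b\tensor 1)w = w^*w\,\pi(b) = \pi(b)$. The plan is to reduce to cyclic representations and to realize each associated normal state as a vector state of the amplified identity.

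First I would decompose $\pi$ into cyclic parts. By a Zorn's-lemma argument, choose a maximal family $\{\xi_j\}_{j\in J}$ of unit vectors in $\H$ whose cyclic subspaces $\H_j = \overline{\pi(\N)\xi_j}$ are pairwise orthogonal. Each $\H_j$ is $\pi(\N)$-invariant with projection in $\pi(\N)'$, and maximality forces $\H = \bigoplus_{j\in J}\H_j$; let $\pi_j$ denote the cyclic subrepresentation on $\H_j$ and $\omega_j(b) = \langle\xi_j,\pi(b)\xi_j\rangle$ its (normal) state.

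The \emph{crux} is that every normal state of $\N$ is a vector state of the amplified identity representation. By the structure theorem for the predual, a normal state $\omega$ of $\N\subsetof\B(\K)$ can be written
\[
\omega(b) = \sum_{n}\langle\eta_n, b\,\eta_n\rangle, \qquad \eta_n\in\K,\ \sum_n\|\eta_n\|^2 = 1,
\]
so that, fixing an orthonormal family $\{e_n\}$ in $\L_\omega = \ell^2(\mathbb N)$ and setting $\Omega = \sum_n \eta_n\tensor e_n\in\K\tensor\L_\omega$, one has $\omega(b) = \langle\Omega,(b\tensor 1)\Omega\rangle$. Applying this to each $\omega_j$ produces $\Omega_j\in\K\tensor\L_j$, and the cyclic subspace $M_j = \overline{(\N\tensor 1)\Omega_j}$ of $\K\tensor\L_j$ carries a cyclic representation $b\mapsto(b\tensor 1)|_{M_j}$ with cyclic vector $\Omega_j$ and state $\omega_j$. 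By uniqueness of the GNS construction, $\pi(a)\xi_j\mapsto(a\tensor 1)\Omega_j$ extends to a unitary $w_j\colon\H_j\To M_j$ with $(b\tensor 1)w_j = w_j\pi_j(b)$.

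Finally I would assemble the pieces: put $\L = \bigoplus_{j\in J}\L_j$, view each $\K\tensor\L_j$ as a subspace of $\K\tensor\L$, and set $w = \bigoplus_{j} w_j\colon \H\To\K\tensor\L$. Then $w$ is an isometry intertwining $\pi$ with $b\mapsto b\tensor 1$, whence $w^*(b\tensor 1)w = \pi(b)$. The sole analytic input is the predual structure theorem invoked in the crux step, and I expect this to be the main obstacle to make fully rigorous; the remaining steps are the standard cyclic-decomposition and GNS bookkeeping, with normality of $\pi$ ensuring that the states $\omega_j$ are normal and that the summands assemble $\sigma$-weakly, and with the convention $\K\tensor\CC = \K$ keeping the amplifications consistent.
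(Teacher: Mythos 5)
Your proposal is correct and, in substance, it \emph{is} the proof of the cited result: the paper gives no argument of its own for Theorem \ref{representation by isometry}, deferring to Dixmier via Takesaki's Theorem IV.5.5, and the standard proof of that theorem is exactly your argument --- decompose $\pi$ into cyclic subrepresentations by Zorn's lemma, realize each normal state $\omega_j$ as a vector state of the amplification $b \mapsto b \tensor 1$, and glue the resulting GNS unitaries into a single isometry $w$. The one step you flag as the ``main obstacle,'' namely writing a normal state of $\N \subsetof \B(\K)$ as $\sum_n \langle \eta_n, \cdot\, \eta_n \rangle$ with $\sum_n \|\eta_n\|^2 = 1$, is itself a standard theorem (Takesaki, Theorem II.2.6), so nothing is left open; note only that you prove the stronger intertwining relation $(b \tensor 1)w = w\pi(b)$ directly, which the paper instead rederives from the weaker identity $\pi(b) = w^*(b \tensor 1)w$ in Proposition \ref{interwines}, so that proposition would be redundant given your construction.
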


Let $\L$ and $w$ be as in Theorem \ref{representation by isometry} above.

\begin{proposition}\label{interwines}
For all $b \in \N$, $(b \tensor 1) w = w \pi(b)$.
\end{proposition}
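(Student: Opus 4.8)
The plan is to exploit the multiplicativity of $\pi$ together with the fact that $w$ is an isometry. Write $p = ww^* \in \B(\K\tensor\L)$ for the range projection of $w$; since $w$ is an isometry we have $w^*w = 1$, and $p = p^* = p^2$. The key observation is that $w\pi(b) = ww^*(b\tensor 1)w = p(b\tensor 1)w$, so $w\pi(b)$ is nothing but the component of $(b\tensor 1)w$ lying in the range of $w$. Hence it suffices to show that the complementary component $(1-p)(b\tensor 1)w$ vanishes.

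To establish this, I would compute $\pi(b^*b)$ in two different ways. On one hand, since $\pi$ is a $*$-homomorphism, $\pi(b^*b) = \pi(b)^*\pi(b)$; expanding via $\pi(b) = w^*(b\tensor 1)w$ and inserting $ww^* = p$ gives
$$\pi(b^*b) = w^*(b\tensor 1)^*\, p\, (b\tensor 1)w.$$
On the other hand, applying the formula of Theorem \ref{representation by isometry} directly to the element $b^*b$ yields
$$\pi(b^*b) = w^*(b^*b\tensor 1)w = w^*(b\tensor 1)^*(b\tensor 1)w.$$
Subtracting the two expressions produces the identity $w^*(b\tensor 1)^*(1-p)(b\tensor 1)w = 0$.

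Finally, setting $X = (1-p)(b\tensor 1)w$ and using that $1-p$ is a self-adjoint projection, so that $(1-p)^*(1-p) = 1-p$, the displayed identity reads exactly $X^*X = 0$. By the C*-identity this forces $X = 0$, that is, $(1-p)(b\tensor 1)w = 0$. Therefore $(b\tensor 1)w = p(b\tensor 1)w = ww^*(b\tensor 1)w = w\pi(b)$, which is the desired intertwining relation.

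I do not anticipate a serious obstacle here, as the argument is short. The one step requiring genuine insight rather than routine manipulation is recognizing that computing $\pi(b^*b)$ in two ways is the right move: this is what converts multiplicativity of $\pi$ into the vanishing of the off-range component. Everything else reduces to the standard positivity fact that $X^*X = 0$ implies $X = 0$.
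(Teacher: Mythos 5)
Your proof is correct and takes essentially the same approach as the paper: both exploit the multiplicativity identity $\pi(b^*b) = \pi(b^*)\pi(b)$ to produce an expression of the form $X^*X = 0$ involving the projection $1 - ww^*$, and then invoke positivity to conclude $X = 0$. The only cosmetic difference is that you take $X = (1-ww^*)(b \tensor 1)w$ and obtain the intertwining relation directly, whereas the paper takes $X = (1-ww^*)(b \tensor 1)ww^*$ and first records the intermediate fact that $ww^* \in (\N \Tensor \CC_\L)'$.
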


\begin{proof}
 For all $b \in \N$,
$$ww^* (b^* \tensor 1)( b \tensor 1) ww^* = w\pi(b^* b)w^* = w\pi(b^*)\pi(b)w^* = ww^* (b^* \tensor 1) ww^* (b \tensor 1) w w^*,$$
so $ww^* (b^* \tensor 1) (1-ww^*) (b \tensor 1) ww^* = 0$. Since $1-ww^*$ is a projection, we conclude that $(1-ww^*)(\N \Tensor \CC_\L) ww^* = 0$, i.e., $ww^* \in (\N \Tensor \CC_\L)'$. Therefore, for all $b\in \N$, $(b \tensor 1) w = (b \tensor 1) w w^* w = w w^* (b\tensor 1) w = w \pi(b)$.
\end{proof}

\begin{definition}\label{definition of G}
  The set $\GGG(\pi) = \{v \in \B(\H, \K) \suchthat \forall b \in \N \,\,bv = v \pi(b)\}$ is a quantum relation between $\M$ and $\N$.
\end{definition}

\begin{proposition}\label{cosurjectivtiy}
 $\M' \subsetof \GGG(\pi)^* \GGG(\pi) $
\end{proposition}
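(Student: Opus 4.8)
The plan is to exploit the isometry $w$ from Theorem \ref{representation by isometry} by slicing it along the vectors of $\L$ to manufacture explicit elements of $\GGG(\pi)$, and then to leverage the module structure of a quantum relation in order to pass from a single well-chosen element to all of $\M'$.

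First, for each $\eta \in \L$ I would set $v_\eta = (1 \tensor \hat\eta^*) w \in \B(\H, \K)$, where $1$ denotes the identity on $\K$ and I use the identification $\K \tensor \CC = \K$. Since $b(1 \tensor \hat\eta^*) = (1 \tensor \hat\eta^*)(b \tensor 1)$ for every $b \in \N$, Proposition \ref{interwines} gives
$$b v_\eta = (1 \tensor \hat\eta^*)(b \tensor 1) w = (1 \tensor \hat\eta^*) w \pi(b) = v_\eta \pi(b),$$
so that $v_\eta \in \GGG(\pi)$ by Definition \ref{definition of G}. Taking adjoints, $v_\eta^* = w^*(1 \tensor \hat\eta)$, and a short computation yields $v_\eta^* v_{\eta'} = w^*(1 \tensor \hat\eta\hat{\eta'}^*)w$, where $\hat\eta\hat{\eta'}^*$ is a rank-one operator on $\L$.

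Next I would fix an orthonormal basis $\{\eta_i\}$ of $\L$. Then $\sum_i \hat\eta_i\hat\eta_i^* = 1$ in the ultraweak topology, hence $\sum_i 1 \tensor \hat\eta_i\hat\eta_i^* = 1$ on $\K \tensor \L$, and since $w$ is an isometry,
$$\sum_i v_{\eta_i}^* v_{\eta_i} = w^* \Big(\sum_i 1 \tensor \hat\eta_i\hat\eta_i^*\Big) w = w^* w = 1.$$
The partial sums lie in the linear span of $\{v^* v' \suchthat v, v' \in \GGG(\pi)\}$ and converge ultraweakly to $1$, so that $1 \in \GGG(\pi)^* \GGG(\pi)$.

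Finally I would upgrade this to all of $\M'$. Because $\GGG(\pi)$ is a quantum relation between $\M$ and $\N$, the inclusion $\N'\GGG(\pi)\M' \subsetof \GGG(\pi)$ makes $\GGG(\pi)$ a right $\M'$-module, so for $m' \in \M'$ and $v \in \GGG(\pi)$ we have $vm'^* \in \GGG(\pi)$ and therefore $m' v^* = (vm'^*)^* \in \GGG(\pi)^*$. Left multiplication by $m'$ is ultraweakly continuous, whence $m' \cdot \GGG(\pi)^*\GGG(\pi) \subsetof \GGG(\pi)^*\GGG(\pi)$; applying this to the unit gives $m' = m' \cdot 1 \in \GGG(\pi)^*\GGG(\pi)$, as desired. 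The one genuinely creative step is the first: recognizing that the operators $v_\eta$ obtained by slicing $w$ actually lie in $\GGG(\pi)$, so that the identity of $\B(\H)$ can be recovered from the isometry relation $w^* w = 1$. Once that is in hand, the remaining steps amount to routine bookkeeping with the quantum-relation axiom.
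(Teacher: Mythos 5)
Your proof is correct and follows essentially the same route as the paper's: slicing the isometry $w$ along an orthonormal basis of $\L$ to produce elements $(1 \tensor \hat\eta^*)w$ of $\GGG(\pi)$, summing $w^*w = 1$ to conclude $1 \in \GGG(\pi)^*\GGG(\pi)$, and then invoking the quantum-relation axiom to absorb $\M'$. The only cosmetic difference is that you multiply by $m' \in \M'$ on the left of $\GGG(\pi)^*$ (via $m'v^* = (vm'^*)^*$), while the paper writes $\M' = \CC_\H\M' \subsetof \GGG(\pi)^*\GGG(\pi)\M' = \GGG(\pi)^*\GGG(\pi)$, multiplying on the right --- both rest on the same module property $\GGG(\pi)\M' \subsetof \GGG(\pi)$.
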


\begin{proof}
  Choose a basis $\{e_\alpha\}_{\alpha \in I}$ of $\L$. For each $\alpha \in I$, $(1 \tensor \hat e_\alpha ^*)w \in \B(\H, \K)$ satisfies $b(1 \tensor \hat e_\alpha^*)w = (1 \tensor \hat e_\alpha^*)(b \tensor 1)w = (1 \tensor \hat e_\alpha^*)w \pi(b)$
for all $b \in \N$, i.e.,  $(1 \tensor \hat e_\alpha^*)w \in \GGG(\pi)$. It follows that
$$1 = w^*w =  \sum_{\alpha \in I} w^* (1 \tensor \hat e_\alpha \hat e_\alpha^*)w = \sum_{\alpha \in I} \left((1 \tensor \hat e_\alpha ^*)w\right)^* \left((1 \tensor \hat e_\alpha ^*)w\right),$$
so $\CC_\H \subsetof \GGG(\pi)^*\GGG(\pi)$. We conclude that $\M' = \CC_\H \M' \subsetof \GGG(\pi)^*\GGG(\pi)\M' = \GGG(\pi)^*\GGG(\pi)$.
\end{proof}

\begin{proposition}\label{coinjectivity}
 $\GGG(\pi) \GGG(\pi)^* \subsetof \N'$
\end{proposition}

\begin{proof}
 For all $v_0, v_1 \in \GGG(\pi)$, and all $b \in \N$, $b v_0 v_1^* = v_0 \pi(b) v_1^* = v_0 (v_1 \pi(b^*))^* = v_0 v_1^* b$. 
\end{proof}

\begin{proposition}
 Therefore $\GGG(\pi)$ is a quantum function from $\M$ to $\N$.
 \end{proposition}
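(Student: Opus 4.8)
The plan is to verify the defining conditions of a quantum function directly against the results already assembled, since the preceding propositions were arranged precisely to supply them. Recall that a quantum function from $\M$ to $\N$ is, by definition, a quantum relation $\V$ between $\M$ and $\N$ satisfying $\M' \subsetof \V^*\V$ and $\V\V^* \subsetof \N'$. Accordingly, I would first note that the ambient structural requirement is already in place: by Definition~\ref{definition of G}, $\GGG(\pi)$ is a quantum relation between $\M$ and $\N$. It then remains only to match the two displayed inclusions against the definition.

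For the first inequality $\M' \subsetof \GGG(\pi)^*\GGG(\pi)$, I would cite Proposition~\ref{cosurjectivtiy}, which establishes exactly this containment. For the second inequality $\GGG(\pi)\GGG(\pi)^* \subsetof \N'$, I would cite Proposition~\ref{coinjectivity}, which establishes exactly that containment. With $\V = \GGG(\pi)$, these two statements coincide term for term with the two conditions in the definition of a quantum function, so nothing further is needed.

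In truth there is no real obstacle to overcome at this stage. All of the substantive work—constructing the Dixmier isometry $w$, extracting from it the operators $(1\tensor\hat e_\alpha^*)w$ lying in $\GGG(\pi)$, and exploiting the intertwining relation of Proposition~\ref{interwines}—was carried out in the three preceding results. The present proposition is merely their conjunction, as the word \emph{Therefore} in its statement indicates: it is a formal assembly of Definition~\ref{definition of G}, Proposition~\ref{cosurjectivtiy}, and Proposition~\ref{coinjectivity}.
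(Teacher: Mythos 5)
Your proposal is correct and matches the paper exactly: the paper gives no separate proof of this proposition, treating it (as the word \emph{Therefore} signals) as the immediate conjunction of Definition~\ref{definition of G} with Propositions~\ref{cosurjectivtiy} and~\ref{coinjectivity}, which is precisely the assembly you carry out. Nothing is missing.
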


Thus, we have defined a function $\GGG: \vN(\N,\M) \To \qF(\M, \N)$.

\section{Homomorphisms from Functions}

 Let $\M\subseteq \B(\H)$ and $\N \subseteq \B(\K)$ be von Neumann algebras, and $\V$ a quantum function from $\M$ to $\N$. The following lemma is a special case of Paschke's structural theorem for self-dual Hilbert $W^*$-modules \cite{Paschke}*{Theorem 3.12}.

\begin{lemma}\label{family of partial isometries}
 There exists a family Lemma $\{u_\alpha\}_{\alpha \in I}$ of partial isometries in $\V$ such that
	\begin{enumerate}
	 \item for all distinct $\alpha,\beta \in I$, $u_\alpha u_\beta^* = 0$, and
	 \item $ \sum u_\alpha^* u_\alpha = 1$.
	\end{enumerate}

\end{lemma}

\begin{proof}
 Let $\F$ be the collection of all sets $S$ of partial isometries in $\V$ such that $u\tilde u^* = 0$ whenever $u,\tilde u \in S$ are distinct. Applying Zorn's Lemma, we obtain a maximal such set $\hat S$.

Suppose that $\sum_{u \in \hat S} u^*u \neq 1$, and let $p = 1 - \sum_{u \in \hat S} u^*u$. The subspace $\V p$ is non-zero because $1 \in \M' \subsetof \V^*\V $. Therefore, pick $v \neq 0$  in $\V p \subsetof \V$. We will now obtain a partial isometry to add to $\hat S$ from the polar decomposition of $v$.

Let $W^*(\V)$ be the von Neumann algebra generated by $\V$, which is the ultraweakly closed subspace of $\B(\H \oplus \K)$ generated by finite words in the elements of $\V$ and their conjugates. Since $\V \V^* \V \subsetof \N' \V \subsetof \V$, $W^*(\V)$ is in fact generated by words of one of the following forms: $1$, $v_0$, $v_0^*$, $v_1^* v_0$, and $v_1 v_0^*$. We conclude that $[\K]W^*(\V)[\H] = \V$, where $[\H]$ and $[\K]$ denote projections onto $\H$ and $\K$ respectively.

Let $v = u_v |v|$ be the polar decomposition of $x$. Since $[\K]v[\H] = v$, $[\K]u_v[\H]= u_v$, so $u_v \in \V$. Furthermore, since $vp = v$, $u_v p = u_v$, so $u_v u^* = 0 $ for all $u \in \hat S$. Thus, $\hat S \union \{u_v\}$ is an element of $\F$ strictly larger than $\hat S$, a contradiction. 
\end{proof}

\begin{definition}\label{definition of G inverse}
 Let $\GGG\inv(\V)$ be the normal unital $*$-homomorphism defined by   $$\GGG\inv(\V)(b) = \sum_{\alpha \in I}  u_\alpha^* b u_\alpha = w_I^*(b \tensor 1) w_I,$$ where $\{u_\alpha\}_{\alpha \in I}$ is any family of partial isometries in $\V$ such that $\sum u_\alpha^* u_\alpha = 1$ and $u_\alpha u_\beta^* = 0$ whenever $\alpha \neq \beta$, and the isometry $w_I \in B(\H, \K \tensor \ell^2(I))$ is defined by $w_I\xi = \sum_{\alpha \in I} u_\alpha \xi \tensor e_\alpha$.
\end{definition}

\begin{proposition}\label{G inverse is well defined}
 The normal unital $*$-homomorphism $\GGG\inv(\V)$ is well defined.
\end{proposition}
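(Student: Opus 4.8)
The plan is to check what ``well defined'' demands here: that the two displayed formulas for $\GGG\inv(\V)(b)$ agree and produce a normal unital $*$-homomorphism, that its range actually lies in $\M$, and---the real content---that the resulting map does not depend on the chosen family. Write $\pi(b) = \sum_{\alpha \in I} u_\alpha^* b u_\alpha$ for a fixed admissible family $\{u_\alpha\}_{\alpha \in I}$. First I would record the elementary consequences of the hypotheses: from $\sum_\alpha u_\alpha^* u_\alpha = 1$ one gets $w_I^* w_I = \sum_\alpha u_\alpha^* u_\alpha = 1$, so $w_I$ is an isometry, and a direct computation gives $w_I^*(\eta \tensor e_\beta) = u_\beta^* \eta$, whence $w_I^*(b \tensor 1) w_I = \sum_\alpha u_\alpha^* b u_\alpha$; thus the two formulas coincide. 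Since $b \mapsto b \tensor 1$ and $x \mapsto w_I^* x w_I$ are ultraweakly continuous, $\pi$ is normal, and it is plainly linear, $*$-preserving, and unital because $w_I^* w_I = 1$.

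For multiplicativity I would imitate the proof of Proposition \ref{interwines}. Computing $w_I w_I^*(\eta \tensor e_\beta) = u_\beta u_\beta^* \eta \tensor e_\beta$ exhibits $w_I w_I^*$ as the block-diagonal operator with diagonal entries $u_\beta u_\beta^* \in \V\V^* \subsetof \N'$, so $w_I w_I^* \in (\N \Tensor \CC_{\ell^2(I)})'$; then $\pi(b_1)\pi(b_2) = w_I^*(b_1 \tensor 1) w_I w_I^* (b_2 \tensor 1) w_I = w_I^*(b_1 b_2 \tensor 1) w_I = \pi(b_1 b_2)$, using that $w_I w_I^*$ commutes with $b_2 \tensor 1$ and that $w_I^* w_I = 1$. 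To see the range lies in $\M = \M''$, fix $a' \in \M'$; because $\N' \V \M' \subsetof \V$ and $\V\V^* \subsetof \N'$, the operators $c_{\alpha\gamma} = u_\alpha a' u_\gamma^*$ lie in $\N'$ and satisfy $u_\alpha a' = \sum_\gamma c_{\alpha\gamma} u_\gamma$ and $a' u_\gamma^* = \sum_\alpha u_\alpha^* c_{\alpha\gamma}$. Expanding $\pi(b) a'$ and $a' \pi(b)$ and commuting each $c_{\alpha\gamma}$ past $b \in \N$ shows both equal $\sum_{\alpha,\gamma} u_\alpha^* c_{\alpha\gamma} b u_\gamma$, so $\pi(b) \in \M$ and $\pi$ is indeed a homomorphism $\N \To \M$.

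The main point is independence of the family. Given a second admissible family $\{\tilde u_\lambda\}_{\lambda \in J}$, set $w_{\alpha\lambda} = u_\alpha \tilde u_\lambda^* \in \V\V^* \subsetof \N'$, so that $u_\alpha = u_\alpha \sum_\lambda \tilde u_\lambda^* \tilde u_\lambda = \sum_\lambda w_{\alpha\lambda}\tilde u_\lambda$. Substituting this into $\sum_\alpha u_\alpha^* b u_\alpha$ and commuting the factors $w_{\alpha\lambda} \in \N'$ past $b$, the inner sum collapses by $\sum_\alpha u_\alpha^* u_\alpha = 1$, since $\sum_\alpha w_{\alpha\mu}^* w_{\alpha\lambda} = \tilde u_\mu\bigl(\sum_\alpha u_\alpha^* u_\alpha\bigr)\tilde u_\lambda^* = \tilde u_\mu \tilde u_\lambda^*$, which vanishes for $\mu \neq \lambda$; the surviving diagonal terms reduce by the partial-isometry identity $\tilde u_\mu \tilde u_\mu^* \tilde u_\mu = \tilde u_\mu$ to $\sum_\lambda \tilde u_\lambda^* b \tilde u_\lambda$. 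Hence both families yield the same homomorphism. The only delicate issue throughout is the justification of the infinite sums---their ultraweak convergence and the interchange of summation with the bilinear products and with the ultraweakly continuous maps---which I expect to be the chief technical obstacle, although each manipulation is routine once one argues with the net of finite partial sums.
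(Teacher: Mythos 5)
Your proposal is correct and takes essentially the same route as the paper: you verify that $w_I$ is an isometry, prove multiplicativity and the commutation with $\M'$ (hence $\pi(\N) \subsetof \M'' = \M$ by the Double Commutant Theorem), and establish independence of the family, all via the same key facts the paper uses, namely $u_\alpha u_\beta^* = 0$ for $\alpha \neq \beta$, $u_\alpha \tilde u_\beta^* \in \V\V^* \subsetof \N'$, and $u_\alpha c u_\beta^* \in \V\M'\V^* \subsetof \N'$. Your minor variations---deriving multiplicativity from $w_Iw_I^* \in (\N \Tensor \CC_{\ell^2(I)})'$ instead of collapsing the double sum directly, and proving independence by expanding one family in the other rather than inserting $1 = \sum_\alpha u_\alpha^*u_\alpha$---are cosmetic rearrangements of the paper's computations, and the convergence issues you flag are handled, as you anticipate, by the identification of the sums with $w_I^*(b \tensor 1)w_I$.
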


\begin{proof}
 Apply Lemma \ref{family of partial isometries} above to obtain a family $\{u_\alpha\}_{\alpha\in I}$ of partial isometries in $\V$ such that  $\sum u_\alpha^* u_\alpha = 1$ and $u_\alpha u_\beta^* = 0$ whenever $\alpha \neq \beta$. Let $w_I = \sum u_\alpha \tensor \hat e_\alpha \in \B(\H, \K \tensor \ell^2(I))$. For all $\xi \in \H$,
$$\|w_I(\xi)\|^2 = \sum_{\alpha, \beta} \langle u_\alpha (\xi) \tensor e_\alpha | u_\beta(\xi) \tensor e_\beta \rangle = \sum _\alpha \langle u_\alpha \xi | u_\alpha \xi\rangle = \langle \xi | \left(\sum_\alpha u_\alpha^* u_\alpha \right) \xi \rangle = \|\xi\|^2.$$ Thus, $w_I$ is an isometry, and we may now define a normal unital completely positive map $\pi_I :\N \To \B(\H)$ by $\pi_I(b) = w_I^* (b \tensor 1) w_I = \sum u_\alpha^* b u_\alpha$. For all $b_0, b_1 \in \N$,
\begin{align*} \pi_I(b_0) \pi_I(b_1) &= \left(\sum_\alpha u_\alpha^* b_0 u_\alpha \right) \left(\sum_\beta u_\beta^* b_1 u_\beta \right)  \\ &= \sum_\alpha u_\alpha^* b_0 u_\alpha u_\alpha^* b_1 u_\alpha \\ & =
\sum_\alpha u_\alpha^* b_0 b_1 u_\alpha u_\alpha^*  u_\alpha
\\ & =  \sum_\alpha u_\alpha^* b_0 b_1   u_\alpha = \pi_I(b_0b_1)
\end{align*}
because $u_\alpha u_\alpha^* \in \V \V^* \subsetof \N'$. We conclude that $\pi_I$ is a normal unital $*$-homomorphism.
For all $b \in \N$, $c \in \M'$,

\begin{align*}
 c \pi_I(b)  & = \sum_{\alpha, \beta \in I} u_\alpha^* u_\alpha c u_\beta^* b u_\beta 
 =  \sum_{\alpha, \beta \in I} u_\alpha^*  b u_\alpha c u_\beta^* u_\beta
= \pi_I(b)c
\end{align*}
because $u_\alpha c u_\beta^* \in\V \M' \V^* \subseteq \V \V^* \subseteq \N'$. By the Double Commutant Theorem,  $\pi_I(\N) \subseteq \M$, so $\pi_I$ may be viewed as a normal unital $*$-homomorphism $\N \To \M$.

If $\{u_\alpha\}_{\alpha\in J}$ is another family of partial isometries that satisfies $\sum_{\alpha\in J} u_\alpha^* u_\alpha = 1 $ and $u_\alpha u_\beta^* = 0 $ for distinct $\alpha, \beta \in J$, we may obtain in the same way a normal unital $*$-homomorphism $\pi_J: \N \To \M$. However, for all $b \in \N$,
$$\pi_J(b) = \sum_{\alpha \in I, \beta \in J } u_\alpha^* u_\alpha u_\beta^* b u_\beta =  \sum_{\alpha \in I, \beta \in J } u_\alpha^*  b u_\alpha u_\beta^* u_\beta = \pi_I (b)$$
because $u_\alpha u_\beta^* \in \V\V^* \subsetof \N'.$ Thus, $\pi_I$ is independent of our choice of family $\{u_\alpha\}_{\alpha \in I}$, and we may define $\GGG\inv(\V) = \pi_I$.
\end{proof}

Thus, we have defined a function $\GGG\inv: \qF(\M, \N) \To \vN(\N, \M)$.

\section{$\GGG\inv$ is the Inverse of $\GGG$}

Let $\M \subsetof \B(\H)$ and $\N \subsetof \B(\K)$ be von Neumann algebras.

\begin{proposition}\label{G inverse is the left inverse}
 Let $\pi: \N \To \M$ be a normal unital $*$-homomorphism. Then $\GGG\inv(\GGG(\pi)) = \pi$.
\end{proposition}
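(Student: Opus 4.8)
The plan is to evaluate $\GGG\inv(\GGG(\pi))$ using a convenient spanning family drawn directly from the Dixmier dilation of $\pi$, rather than the abstract family of partial isometries produced by Lemma \ref{family of partial isometries}. I would fix a Hilbert space $\L$ and an isometry $w \in \B(\H, \K \tensor \L)$ as in Theorem \ref{representation by isometry}, together with an orthonormal basis $\{e_\alpha\}_{\alpha \in I}$ of $\L$, and set $u_\alpha = (1 \tensor \hat e_\alpha^*)w \in \B(\H, \K)$. The proof of Proposition \ref{cosurjectivtiy} already supplies the two facts I need: each $u_\alpha$ lies in $\GGG(\pi)$, and $\sum_\alpha u_\alpha^* u_\alpha = w^* w = 1$.

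First I would compute $\sum_\alpha u_\alpha^* b u_\alpha$ directly. Writing $u_\alpha^* b u_\alpha = w^*(1 \tensor \hat e_\alpha)\, b\, (1 \tensor \hat e_\alpha^*)\, w = w^*(b \tensor \hat e_\alpha \hat e_\alpha^*) w$ and summing over $\alpha$, the resolution of the identity $\sum_\alpha \hat e_\alpha \hat e_\alpha^* = 1_\L$ yields
$$\sum_\alpha u_\alpha^* b u_\alpha = w^*(b \tensor 1) w = \pi(b).$$
Thus it remains only to identify $\GGG\inv(\GGG(\pi))(b)$ with $\sum_\alpha u_\alpha^* b u_\alpha$.

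The main obstacle is that $\{u_\alpha\}$ need not consist of partial isometries, nor need it satisfy the orthogonality relation $u_\alpha u_\beta^* = 0$, so Definition \ref{definition of G inverse} does not apply to it verbatim. What I would exploit is that the independence-of-family computation inside the proof of Proposition \ref{G inverse is well defined} uses only the completeness relation $\sum u^* u = 1$ together with the containment $\V \V^* \subsetof \N'$, and never the partial-isometry hypothesis. Concretely, let $\{p_\gamma\}_{\gamma \in J}$ be a family of partial isometries in $\GGG(\pi)$ as in Definition \ref{definition of G inverse}, so that $\GGG\inv(\GGG(\pi))(b) = \sum_\gamma p_\gamma^* b p_\gamma$ and $\sum_\gamma p_\gamma^* p_\gamma = 1$. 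Inserting this resolution of the identity and commuting via $u_\alpha p_\gamma^* \in \GGG(\pi)\GGG(\pi)^* \subsetof \N'$ gives
$$\sum_\alpha u_\alpha^* b u_\alpha = \sum_{\alpha, \gamma} u_\alpha^* b u_\alpha p_\gamma^* p_\gamma = \sum_{\alpha, \gamma} u_\alpha^* u_\alpha p_\gamma^* b p_\gamma = \sum_\gamma p_\gamma^* b p_\gamma = \GGG\inv(\GGG(\pi))(b),$$
where the last sum collapses using $\sum_\alpha u_\alpha^* u_\alpha = 1$. Combining this with the previous display yields $\GGG\inv(\GGG(\pi))(b) = \pi(b)$ for all $b \in \N$, which is the claim.
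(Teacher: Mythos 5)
Your proof is correct, but it takes a genuinely different route from the paper's. The paper's proof is a one-line computation: take any family $\{u_\alpha\}$ of partial isometries as in Definition \ref{definition of G inverse}; since each $u_\alpha$ lies in $\GGG(\pi)$, it satisfies $b u_\alpha = u_\alpha \pi(b)$ by the very definition of $\GGG(\pi)$, whence $u_\alpha^* b u_\alpha = u_\alpha^* u_\alpha \pi(b)$, and summing against $\sum_\alpha u_\alpha^* u_\alpha = 1$ gives $\pi(b)$ immediately. You instead work with the concrete dilation family $u_\alpha = (1 \tensor \hat e_\alpha^*)w$, for which $\sum_\alpha u_\alpha^* b u_\alpha = w^*(b \tensor 1)w = \pi(b)$ is transparent, and then bridge to the abstract family $\{p_\gamma\}$ via the resolution-of-identity trick, correctly observing that the independence-of-family computation inside Proposition \ref{G inverse is well defined} uses only the completeness relation $\sum u^* u = 1$ together with $\GGG(\pi)\GGG(\pi)^* \subsetof \N'$, and not the partial-isometry or mutual-orthogonality hypotheses. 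This buys a mild strengthening worth noting --- $\GGG\inv(\V)$ can be evaluated on \emph{any} family in $\V$ with $\sum u_\alpha^* u_\alpha = 1$, in particular directly from the Dixmier dilation --- but at the cost of an extra layer of argument: you never exploit the defining intertwining property of elements of $\GGG(\pi)$, which is exactly what collapses the paper's proof to a single display. Your double-sum manipulations are of the same kind the paper itself performs in Proposition \ref{G inverse is well defined}, so they are legitimate at the paper's level of rigor.
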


\begin{proof}
 Let $\{u_\alpha\}_{\alpha \in I}$ be a family of partial isometries in $\GGG(\pi)$ such that $\sum_{\alpha \in I} u^*_\alpha u_\alpha =1$ and $u_\alpha u_\beta^* = 0$ whenever $\alpha \neq \beta$. For all $b \in \N$,
$$\GGG\inv(\GGG(\pi))(b) = \sum_{\alpha \in I} u_\alpha^* b u_\alpha = \sum_{\alpha \in I} u_\alpha^* u_\alpha \pi(b) = \pi(b).$$
\end{proof}

\begin{proposition}\label{homotopy}
 Let $\L$ be a Hilbert space, and let $w_0, w_1 \in \B(\H , \K \tensor \L)$ be isometries such that $\pi_i(b) = w_i^* (b \tensor 1) w_i$ defines a pair of $*$-homomorphisms $\N \To \M$. If $\pi_0 = \pi_1$, then $w_0 w_1^* \in (\N \Tensor \CC_\L)'$.
\end{proposition}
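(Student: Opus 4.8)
The plan is to show that $w_0 w_1^*$ commutes with every element of $\N \Tensor \CC_\L$. Since $\CC_\L$ consists of the scalar operators on $\L$, the von Neumann algebra $\N \Tensor \CC_\L$ is exactly $\{b \tensor 1 \suchthat b \in \N\}$, so it suffices to verify that $(b \tensor 1) w_0 w_1^* = w_0 w_1^* (b \tensor 1)$ for every $b \in \N$. Everything will then follow from the intertwining relation of Proposition \ref{interwines}.

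The key observation is that although Proposition \ref{interwines} is stated for the particular isometry furnished by Theorem \ref{representation by isometry}, its proof uses nothing about $w$ beyond the facts that $w$ is an isometry and that $b \mapsto w^*(b \tensor 1)w$ is a $*$-homomorphism. Both hypotheses hold here for $w_0$ and for $w_1$, so applying the proposition to each gives $(b \tensor 1) w_i = w_i \pi_i(b)$ for all $b \in \N$ and each $i \in \{0,1\}$. Now I would invoke $\pi_0 = \pi_1 =: \pi$. Taking adjoints in the relation for $w_1$, and using that $\pi$ is a $*$-homomorphism, yields $w_1^*(b \tensor 1) = \pi(b) w_1^*$ for all $b \in \N$. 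Combining this with the relation $(b \tensor 1) w_0 = w_0 \pi(b)$ gives
$$(b \tensor 1) w_0 w_1^* = w_0 \pi(b) w_1^* = w_0 w_1^* (b \tensor 1),$$
which is precisely the commutation we wanted, so $w_0 w_1^* \in (\N \Tensor \CC_\L)'$.

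I expect no genuine obstacle here: once the intertwining relations are in hand, the argument collapses to a single line. The only point requiring care is the justification that Proposition \ref{interwines} is legitimately available for arbitrary isometries $w_i$ inducing $*$-homomorphisms, rather than solely for the Dixmier isometry in whose context it was originally stated. If one preferred to remain strictly within that context, one could reprove the intertwining relation inline, repeating the short projection argument of that proof for each $w_i$ separately; but since that proof is manifestly insensitive to the provenance of the isometry, simply citing the proposition for each of $w_0$ and $w_1$ is cleaner and fully rigorous.
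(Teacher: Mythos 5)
Your proof is correct and follows essentially the same route as the paper: the paper's one-line computation $(b \tensor 1) w_0 w_1^* = w_0 \pi_0(b) w_1^* = w_0 \pi_1(b) w_1^* = w_0(w_1\pi_1(b^*))^* = w_0((b^*\tensor 1)w_1)^* = w_0 w_1^*(b \tensor 1)$ is exactly your argument, resting implicitly on the same observation you make explicit, namely that the intertwining relation of Proposition \ref{interwines} holds for any isometry inducing a $*$-homomorphism. Your remark justifying this reuse is a fair point of care, not a gap.
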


\begin{proof}
 For all $b \in \N$, $(b \tensor 1) w_0 w_1^* = w_0 \pi_0(b) w_1^* = w_0 \pi_1(b) w_1^* = w_0(w_1\pi_1(b^*))^* = w_0 ((b^* \tensor 1 )w_1))^* = w_0 w_1^* (b \tensor 1)$.
\end{proof}

\begin{lemma}\label{w I generates V}
 Let $\V$ be a quantum function from $\M$ to $\N$, and let $\{u_\alpha\}_{u \in I}$ and $w_I$ be as in Definition \ref{definition of G inverse}. 
Then $(\N \Tensor \CC_{\ell^2(I)})'w_I \M' = \V \Tensor \B(\CC, \ell^2(I))$.
\end{lemma}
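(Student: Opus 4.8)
The plan is to prove the two inclusions separately, after two preliminary identifications. First, both sides are ultraweakly closed subspaces of $\B(\H, \K \tensor \ell^2(I))$: indeed $\V \subsetof \B(\H,\K)$ and $\B(\CC,\ell^2(I))$ tensor into $\B(\H \tensor \CC, \K \tensor \ell^2(I)) = \B(\H, \K \tensor \ell^2(I))$, and $w_I$ maps $\H \To \K \tensor \ell^2(I)$. Second, since $\CC_{\ell^2(I)}' = \B(\ell^2(I))$, the commutation theorem for tensor products gives $(\N \Tensor \CC_{\ell^2(I)})' = \N' \Tensor \B(\ell^2(I))$, so I may work with the latter algebra throughout.

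For the inclusion $(\N' \Tensor \B(\ell^2(I)))\, w_I\, \M' \subsetof \V \Tensor \B(\CC, \ell^2(I))$, I would first note that $w_I = \sum_\alpha u_\alpha \tensor \hat e_\alpha$ is an ultraweak limit of partial sums $u_\alpha \tensor \hat e_\alpha$ with $u_\alpha \in \V$ and $\hat e_\alpha \in \B(\CC,\ell^2(I))$, hence $w_I \in \V \Tensor \B(\CC, \ell^2(I))$. It then suffices to check that $\V \Tensor \B(\CC, \ell^2(I))$ is invariant under left multiplication by $\N' \Tensor \B(\ell^2(I))$ and right multiplication by $\M'$. On generators this is immediate from the quantum relation condition $\N' \V \M' \subsetof \V$: for $b' \in \N'$, $S \in \B(\ell^2(I))$, $v \in \V$, $T \in \B(\CC,\ell^2(I))$, and $c \in \M'$, one has $(b' \tensor S)(v \tensor T)(c \tensor 1) = b'vc \tensor ST$ with $b'vc \in \V$ and $ST \in \B(\CC,\ell^2(I))$, and invariance for arbitrary elements follows by ultraweak closure.

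The reverse inclusion is the crux, and it is where the defining property of a quantum function enters. The key identity is
$$ v \tensor \hat e_\beta = (v \tensor \hat e_\beta)\, w_I^*\, w_I, $$
valid because $w_I$ is an isometry. Expanding $w_I^* = \sum_\gamma u_\gamma^* \tensor \hat e_\gamma^*$ yields $(v \tensor \hat e_\beta) w_I^* = \sum_\gamma v u_\gamma^* \tensor \hat e_\beta \hat e_\gamma^*$, and since each coefficient $v u_\gamma^* \in \V\V^* \subsetof \N'$ by coinjectivity, this operator lies in the ultraweakly closed space $\N' \Tensor \B(\ell^2(I))$. Hence $v \tensor \hat e_\beta \in (\N' \Tensor \B(\ell^2(I)))\, w_I$ for every $v \in \V$ and $\beta \in I$. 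Multiplying on the left by $1 \tensor S$ with $S \in \B(\ell^2(I))$ produces $v \tensor S\hat e_\beta$; as $S$ ranges over $\B(\ell^2(I))$ the operators $S \hat e_\beta$ exhaust $\B(\CC,\ell^2(I))$, so every generator $v \tensor T$ of $\V \Tensor \B(\CC,\ell^2(I))$ lies in $(\N' \Tensor \B(\ell^2(I)))\, w_I \subsetof (\N' \Tensor \B(\ell^2(I)))\, w_I\, \M'$. Taking the ultraweakly closed span finishes the proof.

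I expect the main obstacle to be the reverse inclusion, and within it the single point that $(v \tensor \hat e_\beta) w_I^*$ genuinely lands in $\N' \Tensor \B(\ell^2(I))$: this is exactly where $\V\V^* \subsetof \N'$ is indispensable, just as the isometry relation $w_I^* w_I = 1$ silently encodes $\M' \subsetof \V^*\V$. The remaining difficulty is purely bookkeeping—keeping $\B(\CC,\ell^2(I))$ distinct from $\B(\ell^2(I))$ and tracking which tensor factor each operator acts on.
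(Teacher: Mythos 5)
Your proposal is correct and follows essentially the same route as the paper: your key identity $v \tensor \hat e_\beta = (v \tensor \hat e_\beta)w_I^* w_I$, with $(v \tensor \hat e_\beta)w_I^* = \sum_\gamma vu_\gamma^* \tensor \hat e_\beta \hat e_\gamma^* \in \N' \Tensor \B(\ell^2(I))$ via $\V\V^* \subsetof \N'$, is exactly the paper's computation $v \tensor \hat f = \sum_\alpha (vu_\alpha^* \tensor 1)(1 \tensor \hat f\hat e_\alpha^*)w_I$, since $w_I^*w_I = 1$ just repackages $\sum_\alpha u_\alpha^* u_\alpha = 1$. The forward inclusion is likewise handled identically, via $w_I \in \V \Tensor \B(\CC, \ell^2(I))$ and the quantum relation property.
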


\begin{proof}
 By definition, $w_I = \sum u_\alpha \tensor \hat e_\alpha \in \V \Tensor \B(\CC, \ell^2(I))$, so $(\N \Tensor \CC_{\ell^2(I)})'w_I \M' = (\N' \Tensor \B(\ell^2(I)))w_I \M' \subseteq \V \Tensor \B(\CC, \ell^2(I))$.

Let $f \in \ell^2(I)$ and $v \in \V$. Then for all $\alpha \in I$,
$$
u_\alpha \tensor \hat f
=
\sum_{\beta \in I} (1 \tensor \hat f \hat e_\alpha^*)(u_\beta \tensor \hat e_\beta)
=
(1 \tensor \hat f \hat e_\alpha^*)w_I
\in
  (\N \Tensor \CC_{\ell^2(I)})'w_I \M',$$
so
$$
v \tensor \hat f
=
\sum_{\alpha \in I} (v \tensor \hat f) (u_\alpha^* u_\alpha)
=
\sum_{\alpha \in I} (v u_\alpha^* \tensor 1) (u_\alpha \tensor \hat f)
\in 
(\N \Tensor \CC_{\ell^2(I)})'w_I \M'
$$ 
because $vu_\alpha^* \in \V \V^* \subsetof \N'$. It follows that $\V \Tensor \B(\CC, \ell^2(I)) \subsetof (\N \Tensor \CC_{\ell^2(I)})'w_I \M',$ concluding the proof.
\end{proof}

\begin{proposition}\label{G inverse is injective}
 The function $\GGG\inv$ is injective.
\end{proposition}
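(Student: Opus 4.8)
The plan is to show that if $\V_0, \V_1 \in \qF(\M, \N)$ satisfy $\GGG\inv(\V_0) = \GGG\inv(\V_1) =: \pi$, then $\V_0 = \V_1$; by symmetry it suffices to prove $\V_0 \subsetof \V_1$. The engine will be Proposition \ref{homotopy} together with Lemma \ref{w I generates V}, which between them pin down a quantum function by the homomorphism it induces.

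First I would use Lemma \ref{family of partial isometries} to choose families of partial isometries for $\V_0$ and $\V_1$, indexed by $I_0$ and $I_1$, and form the associated isometries $w_0 = w_{I_0}$ and $w_1 = w_{I_1}$ of Definition \ref{definition of G inverse}, so that $\pi(b) = w_0^*(b \tensor 1)w_0 = w_1^*(b \tensor 1)w_1$. Since Proposition \ref{homotopy} requires both isometries to land in a common space, I would pass to $\L = \ell^2(I_0) \oplus \ell^2(I_1)$, let $\iota_j$ denote the isometric inclusion $\K \tensor \ell^2(I_j) \hookrightarrow \K \tensor \L$, and set $\tilde w_j = \iota_j w_j \in \B(\H, \K \tensor \L)$. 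Each $\tilde w_j$ is again an isometry representing $\pi$, because $\iota_j^*(b \tensor 1)\iota_j = b \tensor 1$ on the $j$-th summand. Proposition \ref{homotopy} then yields $\tilde w_0 \tilde w_1^* \in (\N \Tensor \CC_\L)' = \N' \Tensor \B(\L)$.

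The crux is to convert this membership into $\V_0 \subsetof \V_1$. Writing $w_j = \sum_\alpha u^{(j)}_\alpha \tensor \hat e_\alpha$ shows $\tilde w_j \in \V_j \Tensor \B(\CC, \L)$, so
$$\tilde w_0 = (\tilde w_0 \tilde w_1^*)\,\tilde w_1 \in (\N' \Tensor \B(\L))(\V_1 \Tensor \B(\CC, \L)) \subsetof \V_1 \Tensor \B(\CC, \L),$$
where the last inclusion uses $\N'\V_1 \subsetof \V_1$. Next, for any $g \in \L$ the operator $1_\K \tensor \hat g^*$ carries $\V_1 \Tensor \B(\CC, \L)$ into $\V_1$, since it merely rescales the $\K$-leg by the scalar $\hat g^* \hat{\phantom{g}}$; applying it to $\tilde w_0$ places each generating partial isometry $u^{(0)}_\alpha = (1_\K \tensor \hat f_\alpha^*)\tilde w_0$ of $\V_0$ inside $\V_1$. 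Finally, imitating the computation in Lemma \ref{w I generates V}, every $v \in \V_0$ satisfies $v = \sum_\alpha (v u^{(0)*}_\alpha) u^{(0)}_\alpha$ with $v u^{(0)*}_\alpha \in \V_0 \V_0^* \subsetof \N'$ and $u^{(0)}_\alpha \in \V_1$; each finite partial sum therefore lies in $\N'\V_1 \subsetof \V_1$, and because $\sum_\alpha u^{(0)*}_\alpha u^{(0)}_\alpha = 1$ the partial sums converge ultraweakly to $v$, which then lies in the ultraweakly closed space $\V_1$. Hence $\V_0 \subsetof \V_1$, and by symmetry $\V_0 = \V_1$.

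The main obstacle I anticipate is the bookkeeping across the common Hilbert space $\L$: arranging that both $\tilde w_j$ represent the same $\pi$ so that Proposition \ref{homotopy} applies, and then tracking the tensor legs carefully enough to descend from $\tilde w_0 \in \V_1 \Tensor \B(\CC, \L)$ back to $\V_0 \subsetof \V_1$. The ultraweak convergence of the infinite sum in the last step, which rests on $\sum_\alpha u^{(0)*}_\alpha u^{(0)}_\alpha = 1$ and the ultraweak closedness of $\V_1$, is the only analytic point, and it is handled exactly as in the proof of Lemma \ref{w I generates V}.
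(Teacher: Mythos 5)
Your proof is correct and takes essentially the same route as the paper: both arguments place the two representing isometries in a common space $\K \tensor \L$, apply Proposition \ref{homotopy} to get $\tilde w_0 \tilde w_1^* \in (\N \Tensor \CC_\L)'$, and then use the generation mechanism of Lemma \ref{w I generates V} (the expansion $v = \sum_\alpha (v u_\alpha^*) u_\alpha$ with $v u_\alpha^* \in \V\V^* \subsetof \N'$) to transfer membership from one quantum function to the other. The only difference is bookkeeping: the paper pads the index sets with zero partial isometries to equal cardinality and conjugates by a unitary $s \in \B(\ell^2(I_0), \ell^2(I_1))$ so that Lemma \ref{w I generates V} can be cited verbatim, whereas you embed both spaces into $\L = \ell^2(I_0) \oplus \ell^2(I_1)$ and re-derive the needed half of that lemma inline.
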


\begin{proof}
 For $k \in \{0,1\}$, let $\V_k \in \mathbf{qF(\M, \N)}$, and let $\{u_\alpha\}_{\alpha \in I_k}$ be a family of partial isometries in $\V_k$ such that $u_\alpha u_\beta^* =0$ for distinct $\alpha,\beta \in I_k$, and $\sum u_\alpha^* u_\alpha = 1$. We may assume that $I_0$ and $I_1$ have equal, non-zero cardinality by throwing in indexed instances of the zero partial isometry where necessary. Thus, we may choose a unitary $s \in \B (\ell^2(I_0), \ell^2 (I_1))$.

Suppose that $\GGG\inv(\V_0) = \GGG\inv(\V_1)$. For all $b \in \N$,
\begin{align*}
 ((1\tensor s) w_{I_0})^* (b \tensor 1) ((1 \tensor s) w_{I_0}) = w_{I_0}^* (b \tensor 1) w_{I_0}  & = \GGG\inv(\V_0)(b) \\ &=  \GGG\inv(\V_1) (b) = w_{I_1}^* (b\tensor 1) w_{I_1}
\end{align*}
By Proposition \ref{homotopy}, $ (1 \tensor s)w_{I_0}w_{I_1}^* \in (\N \Tensor \CC_{\ell^2(I_1)})'$.

By Lemma \ref{w I generates V} above, 
\begin{align*}
\V_1 \Tensor \B(\CC, \ell^2(I_1))
&= 
(\N \Tensor \CC_{\ell^2(I_1)})'w_{I_1} \M'
\\ & \supseteq
 (\N \Tensor \CC_{\ell^2(I_1)})'((1 \tensor s)w_{I_0}w_{I_1}^*) w_{I_1} \M'
\\ & =
(\N \Tensor \CC_{\ell^2(I_1)})'(1 \tensor s)w_{I_0} \M'
\\ & =
(1 \tensor s)(\N \Tensor \CC_{\ell^2(I_0)})'w_{I_0} \M'
\\ &=
(1 \tensor s)(\V_0 \Tensor \B(\CC_, \ell^2(I_0)))
\\ & = 
\V_0 \Tensor \B(\CC, \ell^2(I_1)).
\end{align*}
Choosing an arbitrary unit vector $f \in \ell^2(I_1)$, we conclude that
$$\V_1 = (1 \tensor \hat f^*) \left(\V_1 \Tensor \B(\CC, \ell^2(I_1))\right)
\supseteq 
(1 \tensor \hat f^*) \left(\V_0 \Tensor \B(\CC, \ell^2(I_1))\right)
= \V_0.
 $$
Similarly, $\V_1 \subsetof \V_0$, so $\V_1 = \V_0$.
\end{proof}

\begin{theorem}\label{G is bijective}
 Let $\M\subsetof \B(\H)$ and $\N \subsetof \B(\K)$ be von Neumann algebras. The function 
$$\GGG: \mathbf{vN}(\N, \M) \To \mathbf{qF}(\M, \N)$$
defined by
$$\GGG(\pi) = \{v \in \B(\H, \K)\suchthat \forall b \in \N \,\, bv = \pi(b)v\}$$
is a bijection. 
\end{theorem}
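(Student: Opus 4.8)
The plan is to deduce the theorem formally from the two substantive facts already in hand, namely that $\GGG\inv$ is a left inverse of $\GGG$ (Proposition \ref{G inverse is the left inverse}) and that $\GGG\inv$ is injective (Proposition \ref{G inverse is injective}). I would argue that $\GGG$ and $\GGG\inv$ are mutually inverse bijections, so that no new analysis is required; all the analytic content lives in Sections 1 and 2, where $\GGG$ and $\GGG\inv$ are constructed and their defining inequalities verified.

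First I would record that the identity $\GGG\inv(\GGG(\pi)) = \pi$ for every $\pi \in \vN(\N,\M)$ says exactly that $\GGG\inv \circ \GGG = \mathrm{id}_{\vN(\N,\M)}$. A function with a left inverse is injective, so $\GGG$ is injective; and a function that serves as a left inverse is automatically surjective, so $\GGG\inv$ maps onto $\vN(\N,\M)$. Combining this surjectivity with Proposition \ref{G inverse is injective}, I conclude that $\GGG\inv$ is a bijection.

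It then remains to check that $\GGG$ is a two-sided inverse of $\GGG\inv$, equivalently that $\GGG \circ \GGG\inv = \mathrm{id}_{\qF(\M,\N)}$. Given any quantum function $\V \in \qF(\M,\N)$, I would apply the left-inverse identity to the homomorphism $\GGG\inv(\V) \in \vN(\N,\M)$, obtaining $\GGG\inv\big(\GGG(\GGG\inv(\V))\big) = \GGG\inv(\V)$. Since $\GGG\inv$ is injective, I may cancel it from both sides to get $\GGG(\GGG\inv(\V)) = \V$. Hence $\GGG\inv$ is a right inverse of $\GGG$ as well, and $\GGG$ is a bijection with inverse $\GGG\inv$.

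I do not expect a genuine obstacle in this final step, since the difficult work—building $\GGG\inv$ in Definition \ref{definition of G inverse} and establishing its injectivity through Lemma \ref{w I generates V} and Proposition \ref{homotopy}—is already complete. The one point requiring care is purely a matter of bookkeeping: one must keep separate the injectivity of $\GGG$, which is immediate from the left-inverse property, and the injectivity of $\GGG\inv$, which is the content of Proposition \ref{G inverse is injective}, as it is precisely the latter that upgrades the one-sided inverse into a genuine two-sided inverse.
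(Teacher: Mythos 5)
Your proposal is correct and is exactly the paper's argument: the paper's proof of Theorem \ref{G is bijective} simply states that the result follows immediately from Propositions \ref{G inverse is the left inverse} and \ref{G inverse is injective}, and your write-up supplies precisely the omitted bookkeeping (left inverse gives injectivity of $\GGG$; applying the identity $\GGG\inv(\GGG(\pi)) = \pi$ to $\pi = \GGG\inv(\V)$ and cancelling the injective $\GGG\inv$ gives $\GGG(\GGG\inv(\V)) = \V$, hence surjectivity). No gap; this is the same proof, stated in full detail.
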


\begin{proof}
 The theorem follows immediately from Propositions \ref{G inverse is the left inverse} and \ref{G inverse is injective}.
\end{proof}

\section{Functoriality}

\begin{proposition}
 Let $\M_0$, $\M_1$, and $\M_2$ be von Neumann algebras. If $\V_0 \in \qF(\M_0,\M_1)$ and $\V_1 \in \qF(\M_1, \M_2)$, then $\V_1\V_0 \in \qF(\M_0, \M_2)$. 
\end{proposition}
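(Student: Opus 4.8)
The plan is to verify the two defining inequalities of a quantum function directly, since $\V_1\V_0$ is already a quantum relation between $\M_0$ and $\M_2$ by the very definition of composition. Throughout I would use that the product of ultraweakly closed subspaces is associative and monotone --- both consequences of the ultraweak continuity of left and right multiplication by a fixed operator --- and in particular that $(\V_1\V_0)^* = \V_0^*\V_1^*$, so that $(\V_1\V_0)^*(\V_1\V_0) = \V_0^*\V_1^*\V_1\V_0$ and $(\V_1\V_0)(\V_1\V_0)^* = \V_1\V_0\V_0^*\V_1^*$.

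Before writing the two chains, I would isolate two elementary observations that use only that a commutant contains the identity operator. First, $\V_0 \subsetof \M_1'\V_0$, since $1 \in \M_1'$. Second, $\V_1\M_1' \subsetof \M_2'\V_1\M_1' \subsetof \V_1$, where the first inclusion uses $1 \in \M_2'$ and the second is precisely the quantum-relation axiom for $\V_1$.

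For the cosurjectivity inequality $\M_0' \subsetof (\V_1\V_0)^*(\V_1\V_0)$, I would chain
$$\M_0' \subsetof \V_0^*\V_0 \subsetof \V_0^*\M_1'\V_0 \subsetof \V_0^*\V_1^*\V_1\V_0 = (\V_1\V_0)^*(\V_1\V_0),$$
using cosurjectivity of $\V_0$, then the observation $\V_0 \subsetof \M_1'\V_0$, then cosurjectivity of $\V_1$ (namely $\M_1' \subsetof \V_1^*\V_1$) together with monotonicity. For the coinjectivity inequality $(\V_1\V_0)(\V_1\V_0)^* \subsetof \M_2'$, I would chain
$$(\V_1\V_0)(\V_1\V_0)^* = \V_1\V_0\V_0^*\V_1^* \subsetof \V_1\M_1'\V_1^* \subsetof \V_1\V_1^* \subsetof \M_2',$$
using coinjectivity of $\V_0$ (that $\V_0\V_0^* \subsetof \M_1'$), then the observation $\V_1\M_1' \subsetof \V_1$, then coinjectivity of $\V_1$ (that $\V_1\V_1^* \subsetof \M_2'$).

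I do not anticipate a serious obstacle: the statement is an interplay between the two ``function'' inequalities for $\V_0$ and $\V_1$ and the quantum-relation axiom, exploiting the asymmetry that the middle factor $\M_1'$ can be inserted via the identity and absorbed by the surjectivity/injectivity data of the neighboring factor. The only point demanding care is the bookkeeping of products of ultraweakly closed subspaces; once associativity, monotonicity, and the two identity-containing observations are recorded, each inequality reduces to a three-step chain of inclusions.
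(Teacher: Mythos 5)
Your proof is correct and coincides with the paper's own argument: both inequalities are established by exactly the same two chains of inclusions, inserting $\M_1'$ via the identity operator and absorbing it using the cosurjectivity/coinjectivity of the neighboring factor. Your preliminary remarks on associativity, monotonicity, and $(\V_1\V_0)^* = \V_0^*\V_1^*$ merely make explicit the bookkeeping that the paper leaves tacit (and you silently correct a typo in the paper's second chain, where $\V_1\V_0\V_0^*\V_1$ should read $\V_1\V_0\V_0^*\V_1^*$).
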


\begin{proof}
$$\M_0' \subsetof \V_0^* \V_0 \subsetof \V_0^* \M_1' \V_0 \subsetof \V_0^* \V_1^* \V_1 \V_0 = (\V_1 \V_0)^*(\V_1 \V_0)$$ $$(\V_1 \V_0)(\V_1 \V_0)^* = \V_1 \V_0 \V_0^* \V_1 \subsetof \V_1 \M_1' \V_1^* \subsetof \V_1 \V_1^* \subsetof \M_2'$$
\end{proof}

\begin{proposition}\label{G respects composition}
 Let $\M_0$, $\M_1$, and $\M_2$ be von Neumann algebras. If  $\pi_1: \M_2 \To \M_1$ and $\pi_0: \M_1 \To \M_0$ are normal unital $*$-homomorphisms, then $\GGG(\pi_0 \circ \pi_1) = \GGG(\pi_1) \GGG(\pi_0)$. 
\end{proposition}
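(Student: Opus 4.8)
The plan is to exploit the bijectivity of $\GGG$ from Theorem \ref{G is bijective} rather than compare the two ultraweakly closed subspaces directly. Since $\GGG\inv$ is injective by Proposition \ref{G inverse is injective} and $\GGG\inv(\GGG(\pi_0 \circ \pi_1)) = \pi_0 \circ \pi_1$ by Proposition \ref{G inverse is the left inverse}, it suffices to prove that $\GGG\inv(\GGG(\pi_1)\GGG(\pi_0)) = \pi_0 \circ \pi_1$. Note first that $\GGG(\pi_1)\GGG(\pi_0)$ is a genuine quantum function, hence lies in the domain of $\GGG\inv$: we have $\GGG(\pi_1) \in \qF(\M_1, \M_2)$ and $\GGG(\pi_0) \in \qF(\M_0, \M_1)$, so the preceding proposition gives $\GGG(\pi_1)\GGG(\pi_0) \in \qF(\M_0, \M_2)$.

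To compute $\GGG\inv$ of the composition, I would first invoke Lemma \ref{family of partial isometries} to choose families of partial isometries $\{u_\alpha\}_{\alpha \in I} \subsetof \GGG(\pi_1)$ and $\{w_\gamma\}_{\gamma \in J} \subsetof \GGG(\pi_0)$ with $\sum_\alpha u_\alpha^* u_\alpha = 1$ and $\sum_\gamma w_\gamma^* w_\gamma = 1$. Each product $u_\alpha w_\gamma$ lies in $\GGG(\pi_1)\GGG(\pi_0)$, and using $\sum_\alpha u_\alpha^* u_\alpha = 1$ one checks that $\sum_{\alpha, \gamma} (u_\alpha w_\gamma)^* (u_\alpha w_\gamma) = \sum_\gamma w_\gamma^* w_\gamma = 1$. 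The value of the associated homomorphism at $b \in \M_2$ should then unwind as
$$\sum_{\alpha, \gamma} (u_\alpha w_\gamma)^* b (u_\alpha w_\gamma) = \sum_\gamma w_\gamma^* \Big(\sum_\alpha u_\alpha^* b u_\alpha\Big) w_\gamma = \sum_\gamma w_\gamma^* \pi_1(b) w_\gamma = \pi_0(\pi_1(b)),$$
where the middle equality uses the intertwining relation $b u_\alpha = u_\alpha \pi_1(b)$ with $\sum_\alpha u_\alpha^* u_\alpha = 1$, and the last uses $\pi_1(b) w_\gamma = w_\gamma \pi_0(\pi_1(b))$ with $\sum_\gamma w_\gamma^* w_\gamma = 1$.

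The main obstacle is that Definition \ref{definition of G inverse} computes $\GGG\inv$ from a family of \emph{partial isometries}, whereas the products $u_\alpha w_\gamma$ need not be partial isometries. I would resolve this by observing that the independence-of-choice argument in the proof of Proposition \ref{G inverse is well defined} never actually uses the partial isometry property of the comparison family. Writing $\V = \GGG(\pi_1)\GGG(\pi_0)$ and letting $\{s_\lambda\}$ be the distinguished partial isometries underlying $\GGG\inv(\V)$, for any family $\{t_k\} \subsetof \V$ with $\sum_k t_k^* t_k = 1$ we may insert $1 = \sum_\lambda s_\lambda^* s_\lambda$ and move $b$ past $s_\lambda t_k^* \in \V \V^* \subsetof \M_2'$ to obtain $\sum_k t_k^* b t_k = \sum_\lambda s_\lambda^* b s_\lambda = \GGG\inv(\V)(b)$. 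Applying this with $t_{(\alpha,\gamma)} = u_\alpha w_\gamma$ legitimizes the computation above, giving $\GGG\inv(\GGG(\pi_1)\GGG(\pi_0)) = \pi_0 \circ \pi_1$; injectivity of $\GGG\inv$ then yields $\GGG(\pi_0 \circ \pi_1) = \GGG(\pi_1)\GGG(\pi_0)$.
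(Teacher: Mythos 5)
Your proof is correct, and it takes a recognizably different route through the middle of the argument than the paper does, even though the outer skeleton is the same. The paper also concludes via injectivity of $\GGG\inv$, but its intermediate step is the opposite of yours: it first establishes the easy inclusion $\GGG(\pi_1)\GGG(\pi_0) \subsetof \GGG(\pi_0 \circ \pi_1)$ (a one-line intertwining check on products $v_1 v_0$), and then observes that $\GGG\inv$ takes the same value on both quantum functions --- because any admissible family of partial isometries in the smaller space is admissible in the larger, and Proposition \ref{G inverse is well defined} says the value of $\GGG\inv$ is computed from any admissible family; this is what the terse citation of Definition \ref{definition of G inverse} is compressing. You skip the inclusion entirely and instead compute $\GGG\inv(\GGG(\pi_1)\GGG(\pi_0)) = \pi_0 \circ \pi_1$ directly from the product family $\{u_\alpha w_\gamma\}$. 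The price is that you must strengthen the choice-independence step, and your key observation here is accurate: the displayed computation in the proof of Proposition \ref{G inverse is well defined} uses neither mutual orthogonality nor the partial-isometry property of the comparison family, only $t_k \in \V$ (so that $s_\lambda t_k^* \in \V\V^* \subsetof \M_2'$) and $\sum_k t_k^* t_k = 1$; your verification that $\sum_{\alpha,\gamma}(u_\alpha w_\gamma)^*(u_\alpha w_\gamma) = 1$ is also correct, and the convergence of the double sums is handled at the same level of informality as the paper's own proofs (positive, bounded increasing partial sums, or via the associated isometry $\sum_k t_k \tensor \hat e_k$). What each approach buys: the paper's version is shorter and uses the lemmas exactly as stated; yours proves the intrinsic identity $\GGG\inv(\V_1\V_0) = \GGG\inv(\V_0) \circ \GGG\inv(\V_1)$ for arbitrary quantum functions $\V_0, \V_1$, i.e., contravariant functoriality of $\GGG\inv$ on its own terms without reference to $\GGG$, and your strengthened form of the well-definedness lemma ($\GGG\inv(\V)(b) = \sum_k t_k^* b t_k$ for any family in $\V$ summing to the identity) is a mildly useful fact in its own right.
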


\begin{proof}
 Clearly, $\GGG(\pi_1) \GGG(\pi_0) \subsetof \GGG(\pi_0 \circ \pi_1)$. By Definition \ref{definition of G inverse}, $\GGG\inv(\GGG(\pi_1)\GGG(\pi_0)) =  \GGG\inv(\GGG(\pi_0 \circ \pi_1))$. By Theorem \ref{G is bijective}, we conclude that $\GGG(\pi_1)\GGG(\pi_0)= \GGG(\pi_0 \circ \pi_1)$.
\end{proof}

\begin{proposition}\label{G respects identity}
 Let $\M$ be a von Neumann algebra, and let $\iota: \M \To \M$ be the identity $*$-homomorphism. Then $\GGG(\iota) = \M'$.
\end{proposition}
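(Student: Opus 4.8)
The plan is to unfold the definition of $\GGG$ directly, since for the identity homomorphism the defining intertwining condition collapses to the commutation relation that defines the commutant. Here the domain and codomain of $\iota$ coincide, so we take $\N = \M$ acting on the same Hilbert space $\K = \H$, and $\iota(b) = b$ for every $b \in \M$.

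Substituting $\pi = \iota$ into Definition \ref{definition of G}, we obtain
$$\GGG(\iota) = \{v \in \B(\H) \suchthat \forall b \in \M \,\, bv = v\iota(b)\} = \{v \in \B(\H) \suchthat \forall b \in \M \,\, bv = vb\}.$$
The right-hand side is precisely the set of operators in $\B(\H)$ that commute with every element of $\M$, which is by definition the commutant $\M'$. Hence $\GGG(\iota) = \M'$.

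There is no genuine obstacle here: the result is immediate once one observes that $\iota(b) = b$ turns the intertwining condition $bv = v\pi(b)$ into the commutation relation $bv = vb$. This is exactly as it should be, since $\M'$ is the diagonal quantum relation on $\M$, so $\GGG$ carries the identity homomorphism to the identity quantum relation; together with Proposition \ref{G respects composition}, this completes the verification that $\GGG$ is functorial.
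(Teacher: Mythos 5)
Your proof is correct and takes exactly the approach of the paper, which simply declares the result an immediate consequence of Definition \ref{definition of G}; you have merely spelled out the one-line substitution $\pi = \iota$ that makes the intertwining condition $bv = v\pi(b)$ collapse to $bv = vb$, identifying $\GGG(\iota)$ with the commutant $\M'$.
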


\begin{proof}
 This is an immediate consequence of Definition \ref{definition of G}.
\end{proof}

\begin{definition}\label{definition of vN}
 Let $\mathbf{vN}$ be the category whose objects are von Neumann algebras, and whose morphisms are normal unital $*$-homomorphisms.
\end{definition}

\begin{definition}\label{definition of qM}
 Let $\mathbf{qF}$ be the following category:
\begin{itemize}
 \item The objects of $\mathbf{qF}$ are von Neumann algebras.
 \item For any two objects $\M$ and $\N$, a morphism from $\M$ to $\N$ is a quantum function from $\M$ to $\N$.
 \item For any two morphisms $\V_0 \in \mathbf{qF}(\M_0, \M_1)$ and $\V_1 \in \mathbf{qF}(\M_1, \M_2)$, $\V_1 \circ \V_0 = \V_1 \V_0$.
 \item For any object $\M$, the identity morphism at $\M$ is the quantum function $\M' \in \mathbf{qF}(\M, \M)$.
\end{itemize}
\end{definition}

\begin{theorem}\label{G is a coisomorphism}
 The functor $\GGG: \mathbf{vN} \To \mathbf{qF}$ defined by 
\begin{itemize}
 \item for all objects $\M$ of $\mathbf{vN}$, $\GGG(\M) = \M$, and
 \item for all morphisms $\pi \in \mathbf{vN}(\N \subsetof \B(\K), \M \subsetof \B(\H))$, $$\GGG(\pi) = \{v \in \B(\H, \K) \suchthat \forall b \in \N\,\, bv=v\pi(b)\},$$
\end{itemize}
is a coisomorphism of categories.

\end{theorem}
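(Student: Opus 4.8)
The plan is to recognize that this final theorem is a synthesis of the results already assembled, so the proof is a matter of checking that $\GGG$ satisfies each defining condition of a coisomorphism of categories and invoking the appropriate earlier statement for each. Recall that a coisomorphism is a contravariant functor that is a bijection on objects and restricts to a bijection on every hom-set; equivalently, it is an isomorphism of categories $\mathbf{vN} \To \qF^{\mathrm{op}}$. I would first confirm that $\GGG$ is well defined and contravariant: on objects it is the identity assignment $\GGG(\M) = \M$, and on a morphism $\pi \in \vN(\N, \M)$ it returns the quantum relation $\GGG(\pi) \in \qF(\M, \N)$, which is a quantum function by the results of the first section, so the direction of the arrow is reversed as required.

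Next I would verify the two functorial identities. Proposition \ref{G respects composition} gives $\GGG(\pi_0 \circ \pi_1) = \GGG(\pi_1)\GGG(\pi_0)$; since composition in $\qF$ is defined in Definition \ref{definition of qM} by $\V_1 \circ \V_0 = \V_1 \V_0$, this is exactly the statement that $\GGG$ carries composition to composition in the reversed order. Proposition \ref{G respects identity} gives $\GGG(\iota) = \M'$ for the identity $*$-homomorphism $\iota$ on $\M$, and by Definition \ref{definition of qM} the quantum function $\M'$ is precisely the identity morphism of $\M$ in $\qF$; hence $\GGG$ preserves identities. Together these establish that $\GGG$ is a contravariant functor $\mathbf{vN} \To \mathbf{qF}$.

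Finally I would establish bijectivity. On objects $\GGG$ is plainly a bijection, as it is the identity map on the class of von Neumann algebras. On each hom-set, Theorem \ref{G is bijective} asserts that $\GGG: \vN(\N, \M) \To \qF(\M, \N)$ is a bijection, so $\GGG$ is fully faithful. A contravariant functor that is bijective on objects and on every hom-set is a coisomorphism: its object-level inverse together with the inverse hom-set bijections $\GGG\inv$ automatically assemble into a contravariant functor $\qF \To \mathbf{vN}$ that is a two-sided inverse of $\GGG$. There is essentially no remaining obstacle, since every substantial step — the construction of $\GGG\inv$ in Definition \ref{definition of G inverse} and Proposition \ref{G inverse is well defined}, the injectivity argument of Proposition \ref{G inverse is injective}, and the compatibility with composition — has already been carried out; the only point requiring care is to match each clause of Definitions \ref{definition of vN} and \ref{definition of qM} against the correct earlier proposition, which the above does.
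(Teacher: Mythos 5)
Your proposal is correct and matches the paper's proof, which simply cites Propositions \ref{G respects composition} and \ref{G respects identity} together with Theorem \ref{G is bijective} as a ``straightforward consequence''; you have merely spelled out the same verification in detail, including the routine points (identity on objects, contravariance, assembly of the inverse functor) that the paper leaves implicit.
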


\begin{proof}
 This is a straightforward consequence of Proposition \ref{G respects composition}, Proposition \ref{G respects identity}, and Theorem \ref{G is bijective}.
\end{proof}

\begin{bibdiv}
\begin{biblist}

\bib{KuperbergWeaver}{article}{
author={G. Kuperberg},
author={N. Weaver},
title={A von Neumann algebra approach to quantum metrics},
journal={arXiv:1005.0353v2},
}


\bib{Paschke}{article}{
author={W. L. Paschke},
title={Inner product modules over $B^*$-algebras},
journal={Transactions of the Americal Mathematical Society},
volume={182},
date={1973},
pages={443-468}}

\bib{Rieffel}{article}{
author={M. A. Rieffel}
title={Morita equivalence for C*-algebras and W*-algebras}
journal={Journal of Pure and Applied Algebra}
volume={5}
pages={51-96}
date={1974}
}

\bib{SinhaGoswami}{book}{
author={K. B. Sinha},
author={D. Goswami},
title={Quantum Stochastic Processes and Noncommutative Geometry},
publisher={Cambridge University Press},
series={Cambridge Tracts in Mathematics},
volume={169}
date={2007}
}

\bib{TakesakiI}{book}{
author={M. Takesaki},
title={Theory of Operator Algebras I},
publisher={Springer},
date={1979}
}
 
\bib{Weaver}{article}{
author={N. Weaver},
title={Quantum relations},
journal={arXiv:1005.0354v1}
}

 \end{biblist}
\end{bibdiv}

\end{document}